\newtheorem{theorem}{Theorem}
\theoremstyle{plain}
\newtheorem{claim}{Claim}
\newtheorem{example}{Example}
\newtheorem{lemma}{Lemma}
\newtheorem{proposition}{Proposition}
\numberwithin{equation}{section}
\begin{document}
\title[self-similar sets with exact overlaps]{Lipschitz equivalence of
self-similar sets with exact overlaps}
\author{kan Jiang}
\address{Department of Mathematics, Ningbo University, Ningbo 315211, P. R.
China}
\email{jiangkan@nbu.edu.cn}
\author{songjing Wang}
\address{Department of Mathematics, Ningbo University, Ningbo 315211, P. R.
China}
\email{wangsongjing@nbu.edu.cn}
\author{Lifeng Xi}
\address{Department of Mathematics, Ningbo University, Ningbo 315211, P. R.
China}
\email{xilifeng@nbu.edu.cn}
\subjclass[2000]{Primary 28A80}
\keywords{self-similar set, exact overlap, Lipschitz equivalence, strong
separation condition}
\thanks{Lifeng Xi is the corresponding author. The work is supported by
National Natural Science Foundation of China (Nos. 11771226, 11701302,
11371329, 11471124) and Philosophical and Social Science Planning of
Zhejiang Province (No. 17NDJC108YB). The work is also supported by K.C. Wong
Magna Fund in Ningbo University.}

\begin{abstract}
In this paper, we study a class $\mathcal{A}(\lambda ,n,m)$ of\ self-similar
sets with $m$ exact overlaps generated by $n$ similitudes of the same ratio $%
\lambda .$ We obtain a necessary condition for a self-similar set in $%
\mathcal{A}(\lambda ,n,m)$ to be Lipschitz equivalent to a self-similar set
satisfying the strong separation condition, i.e., there exists an integer $%
k\geq 2$ such that $x^{2k}-mx^{k}+n$ is reducible, in particular, $m$
belongs to $\{a^{i}:a\in \mathbb{N}$ with $i\geq 2\}.$
\end{abstract}

\maketitle





\section{Introduction}

\bigskip

Recall that a compact subset $K$ of Euclidean space is said to be a
self-similar set \cite{H}, if $K=\cup _{i=1}^{n}S_{i}(K)$ is generated by
contractive similitudes $\{S_{i}\}_{i}$ with ratio set $\{r_{i}\}_i\subset
(0,1)$ satisfying $|S_{i}(x)-S_{i}(y)|=r_{i}|x-y|$ for all $x,y.$ The
classical dimension result under the open set condition (\textbf{OSC}) is
\begin{equation}
\dim _{H}K=s\text{ with }\sum\nolimits_{i=1}^{n}(r_{i})^{s}=1.  \label{dim}
\end{equation}
In particular, $K$ is said to be \textbf{dust-like} when the strong
separation condition (\textbf{SSC}) holds, i.e., $S_{i}(K)\cap
S_{j}(K)=\emptyset $ for all $i\neq j,$ then the open set condition holds
and thus (\ref{dim}) is valid.

The self-similar sets with overlaps have complicated structures, for
example, Hochman \cite{Hoch} studied the self-similar sets%
\begin{equation*}
E_{\theta}=E_{\theta}/3\cup (E_{\theta}/3+\theta/3)\cup (E_{\theta}/3+2/3)
\end{equation*}
and obtained $\dim _{H}E_{\theta}=1$ for any $\theta$ irrational. If $\theta
$ is rational, Kenyon \cite{K} obtained that the \textbf{OSC} is fulfilled
for $E_{\theta}$ if and only if $\theta=p/q\in \mathbb{Q}$ with $p\equiv
q\not\equiv 0$ (mod3). Rao and Wen \cite{R} also discussed the structure of $%
E_{\theta}$ with $\theta\in \mathbb{Q}$ using the key idea \textquotedblleft
graph-directed structure\textquotedblright\ introduced by Mauldin and
Williams \cite{M}.

\medskip Recently, Jiang, Wang and Xi \cite{jiang} investigated a class $%
\mathcal{A}(\lambda ,n,m)$ of self-similar sets with exact overlaps where $%
\lambda \in (0,1)$ and $m,n\in \mathbb{N}$ with $1\leq m\leq n-2$. Let%
\textbf{\ }$f_{i}(x)=\lambda x+b_{i}$ with $0=b_{1}<b_{2}<\cdots
<b_{n}=1-\lambda .$ Write $I=[0,1]$ and $I_{i}=f_{i}(I).$ Assume that
\begin{equation*}
\frac{\left\vert I_{i}\cap I_{i+1}\right\vert }{|I_{i}|}\in \{0,\lambda \}%
\text{ if }I_{i}\cap I_{i+1}\neq \emptyset ,\text{ and }\sharp \{i:\frac{%
\left\vert I_{i}\cap I_{i+1}\right\vert }{|I_{i}|}=\lambda \}=m.
\end{equation*}%
We call $E=\cup _{i=1}^{n}f_{i}(E)$ a self-similar set with exact overlap,
denoted by $E\in \mathcal{A}(\lambda ,n,m).$ It is proved in \cite{jiang}
that $\dim _{H}E=\frac{\log \beta }{-\log \lambda }$ where the P.V. number $%
\beta >1$ is a root of the irreducible polynomial $x^{2}-nx+m=(x-\beta
)(x-\beta ^{\prime })$ with $|\beta ^{\prime }|<1<\beta .$

\medskip

In this paper, we will compare self-similar sets in $\mathcal{A}(\lambda
,n,m)$ with dust-like self-similar sets in terms of Lipschitz equivalence.

Two compact subsets $X_{1}$, $X_{2}$ of Euclidean spaces are said to be
Lipschitz equivalent, denoted by $X_1\simeq X_2$, if there is a bijection $f:
$ $X_{1}\rightarrow X_{2}$ and a constant $C>0$ such that for all $x$, $y\in
X_{1},$%
\begin{equation*}
C^{-1}|x-y|\leq |f(x)-f(y)|\leq C|x-y|.
\end{equation*}
Cooper and Pignataro \cite{CP}, Falconer and Marsh \cite{F}, David and
Semmes \cite{GS} and Wen and Xi \cite{WX} showed that two self-similar sets
need not be Lipschitz equivalent although they have the same Hausdorff
dimension.

We concern the Lipschitz equivalence between two self-similar sets with the
\textbf{SSC} and with overlaps respectively.

(1) David and Semmes \cite{GS} posed the $\{1,3,5\}$-$\{1,4,5\}$ problem.
Let $H_{1}=(H_{1}/5)\cup (H_{1}+2/5)\cup (H_{1}+4/5)$ and $%
H_{2}=(H_{2}/5)\cup (H_{2}+3/5)\cup (H_{2}+4/5)\ $be $\{1,3,5\}$, $\{1,4,5\}$
self-similar sets respectively. The problem asks about the Lipschitz
equivalence between $H_{1}$ (with the \textbf{SSC}) and $H_{2}$ (with the
touched structure). Rao, Ruan and Xi \cite{RRX} proved that $H_{1}$ and $%
H_{2}$ are Lipschitz equivalent.

(2) Guo et al. \cite{X} studied the Lipschitz equivalence for $%
K_{n}=(\lambda K_{n})\cup (\lambda K_{n}+\lambda ^{n}(1-\lambda ))\cup
(\lambda K_{n}+1-\lambda )$ with overlaps and proved that $K_{n}\simeq K_{m}$
for all $n,m\geq 1.$ In particular, for $n=1,$ $K_{1}\in \mathcal{A}(\lambda
,3,1)$ is Lipschitz equivalent to a dust-like set $F=(\lambda F)\cup
(\lambda ^{1/2}F+1-\lambda ^{1/2})$.

We will state our main result.

\begin{theorem}
Suppose $E\in \mathcal{A}(\lambda ,n,m)$ and $P(x)=x^{2}-nx+m.$ If there is
a dust-like self-similar set $F$ such that $E\simeq F,$\textbf{\ }then there
exists an integer $k\geq 2$ such that
\begin{equation*}
P(x^{k})=x^{2k}-nx^{k}+m\text{ is reducible in }\mathbb{Z}[x].
\end{equation*}%
In particular, we have
\begin{equation*}
m\in \{a^{i}\ |\ a\in \mathbb{N}\text{ and }i\in \mathbb{N}\text{ with }%
i\geq 2\}.
\end{equation*}
\end{theorem}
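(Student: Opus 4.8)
The plan is to combine three ingredients: the Lipschitz invariance of Hausdorff dimension, an algebraic rigidity statement for the contraction ratios of $F$, and the Pisot property of $\beta$ (i.e. $0<\beta'<1<\beta$, which holds since $\beta\beta'=m>0$ and $\beta+\beta'=n>0$). First I would record that $\dim_H$ is a bi-Lipschitz invariant, so from $E\simeq F$ and the excerpt we get $s:=\dim_H E=\dim_H F=\frac{\log\beta}{-\log\lambda}$, equivalently $\lambda^{s}=\beta^{-1}$. Since $F$ is dust-like it satisfies the \textbf{SSC}, and writing $r_1,\dots,r_N$ for its contraction ratios the dimension formula (\ref{dim}) gives $\sum_{j=1}^{N}r_j^{\,s}=1$.

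The crux, and what I expect to be the main obstacle, is to show that the Lipschitz equivalence forces the ratios of $F$ to be rational powers of $\lambda$ with a common denominator: there exist an integer $k\geq 1$ and $a_1,\dots,a_N\in\mathbb{N}$ with $r_j=\lambda^{a_j/k}$. The difficulty is that $E$ is \emph{not} dust-like, so the classical dust-to-dust commensurability results do not apply directly. Here I would exploit the graph-directed description of $E$ from \cite{jiang}: every natural cylinder of $E$ occurs at a scale $\lambda^{\ell}$, so $E$ is Ahlfors $s$-regular with scale gauge contained in $\lambda^{\mathbb{N}}$. A bi-Lipschitz map $f\colon E\to F$ must match cylinder scales up to a bounded multiplicative constant, and comparing the two scale gauges through Ahlfors regularity together with the algebraic Lipschitz-equivalence criteria (of Falconer--Marsh \cite{F} and Wen--Xi \cite{WX} type) should force the multiplicative semigroup generated by $\{r_j\}$ to be commensurable with $\lambda^{\mathbb{N}}$, yielding $r_j=\lambda^{a_j/k}$. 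Controlling the overlapping structure of $E$ in this comparison is where the real work lies.

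Granting this, the conclusion follows by a short Galois argument. Set $z=\beta^{1/k}=\lambda^{-s/k}$; then $\sum_j r_j^{\,s}=1$ becomes $\sum_{j}z^{-a_j}=1$, a relation with rational coefficients satisfied by $z$. Since $\beta$ is a root of $x^{2}-nx+m$, the number $z$ is a root of $P(x^{k})=x^{2k}-nx^{k}+m$, and $(\beta')^{1/k}$ (real and positive, as $0<\beta'<1$) is another root. Suppose $P(x^{k})$ were irreducible over $\mathbb{Q}$; then $z$ and $(\beta')^{1/k}$ would be $\mathbb{Q}$-conjugate, so applying the corresponding field isomorphism to the relation transports it to $\sum_{j}\bigl((\beta')^{1/k}\bigr)^{-a_j}=1$. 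But $0<\beta'<1$ and $a_j>0$ make every term $(\beta')^{-a_j/k}>1$, so the left-hand side exceeds $1$, a contradiction. Hence $P(x^{k})$ is reducible in $\mathbb{Z}[x]$. The very same contradiction rules out $k=1$, since that would assert the given-irreducible $P(x)=x^{2}-nx+m$ is reducible; therefore $k\geq 2$.

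For the final assertion I would invoke the classical reducibility criterion for $x^{k}-\beta$ over $K=\mathbb{Q}(\beta)$ (Capelli's theorem): reducibility of $P(x^{k})$ is equivalent to reducibility of $x^{k}-\beta$ over $K$, which forces $\beta=\gamma^{p}$ for some $\gamma\in K$ and some prime $p\mid k$ (or, in the exceptional case $4\mid k$, $\beta=-4\gamma^{4}$). Applying the nontrivial automorphism of $K$ gives $\beta'=(\gamma')^{p}$, so $m=\beta\beta'=(\gamma\gamma')^{p}=N_{K/\mathbb{Q}}(\gamma)^{p}$; as $\gamma$ is an algebraic integer (being a root of the monic $x^{p}-\beta\in\mathbb{Z}[\beta][x]$), its norm is a rational integer $a$, whence $m=a^{p}$ with $p\geq 2$. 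The exceptional case yields $m=(2N_{K/\mathbb{Q}}(\gamma))^{4}$ in the same way. In either case $m\in\{a^{i}\mid a\in\mathbb{N},\ i\geq 2\}$, as claimed.
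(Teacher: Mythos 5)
Your outline is correct in overall architecture, but it leaves the paper's main technical step unproved. The crux you yourself flag --- that a bi-Lipschitz equivalence between $E$ and the dust-like $F$ forces $\log r_j/\log\lambda\in\mathbb{Q}$ with a common denominator $k$ --- is precisely Proposition \ref{P:1}, and it occupies all of Section 2. Your justification (``$E$ is Ahlfors $s$-regular with scale gauge in $\lambda^{\mathbb{N}}$, and a bi-Lipschitz map must match cylinder scales, so Falconer--Marsh type criteria \emph{should force} commensurability'') is not an argument: Ahlfors regularity alone never yields commensurability of ratios, and a bi-Lipschitz map does not match cylinders to cylinders. The actual proof requires (i) realizing $E$ inside a graph-directed system with the \textbf{SSC} (Lemma \ref{l:graph}); (ii) showing each image $f(F_{\mathbf{j}})$ is a union of graph-directed cylinders exactly $N$ levels below the smallest cylinder containing it (Lemma \ref{l:depth}); (iii) proving the density ratios $h(F_{\mathbf{j}\ast j})/h(F_{\mathbf{j}})$ take finitely many values and locating a cylinder on which $h$ is constant (Lemmas \ref{l:finite}--\ref{l:linear}); and (iv) a pigeonhole on the sets $\Delta_{\mathbf{j}[j]^{k}}$ to extract $(r_j)^{k-k'}=\lambda^{|\mathbf{e}_{\mathbf{j}[j]^{k}}|-|\mathbf{e}_{\mathbf{j}[j]^{k'}}|}$. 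None of this appears in your proposal, so as written there is a genuine gap at the heart of the theorem.

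Granting Proposition \ref{P:1}, the remainder of your argument is correct and genuinely different from the paper's. Where the paper proves by an elementary coefficient induction (Proposition \ref{P:4}) that $P(x^{q})$ never divides a polynomial of the form $x^{p}-\sum_{i}b_{i}x^{i}$ with $b_{i}\geq 0$, you instead transport the relation $\sum_{j}z^{-a_{j}}=1$ (with $z=\beta^{1/k}$) through the conjugation $z\mapsto(\beta')^{1/k}$ and use $0<\beta'<1$ to get a sum of terms each exceeding $1$; this is valid (note $\beta'>0$ since $\beta\beta'=m>0$ and $\beta+\beta'=n>0$, a point the paper never needs) and is arguably cleaner, being closer to Falconer--Marsh's original conjugate-root trick. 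Likewise, for the final assertion you replace the paper's hands-on factorization of $P(x^{q})$ over $\mathbb{C}$ and comparison of constant terms (Proposition \ref{P:3}) with Capelli's criterion for $x^{k}-\beta$ over $\mathbb{Q}(\beta)$ followed by taking norms; the reduction of ``$P(x^{k})$ reducible over $\mathbb{Q}$'' to ``$x^{k}-\beta$ reducible over $\mathbb{Q}(\beta)$'' via degrees is sound, and the norm computation (including the $-4\gamma^{4}$ case) does give $m=a^{i}$ with $i\geq 2$. So the algebraic half of your proposal would stand as an alternative to Section 3; the analytic half must still be supplied.
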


By this theorem, if $m\in \{2,3,5,6,7,10,11,12,13,14,15,17,\cdots \},$ then
we cannot find a dust-like self-similar set to be Lipschitz equivalent to $%
E\in \mathcal{A}(\lambda ,n,m).$

\begin{example}
For $n=3$ and $m=1,$ we have $P(x)=x^{2}-3x+1$ and an example $K_{1}\simeq
F=(\lambda F)\cup (\lambda ^{1/2}F+1-\lambda ^{1/2})$ in \cite{X} as above.
Now, $P(x^{2})=(x^{2}-x-1)(x^{2}+x-1)$ is reducible and $1\in \{a^{i}\ |\
a\in \mathbb{N}$ and $i\in \mathbb{N}$ with $i\geq 2\}.$
\end{example}

\medskip

The paper is organized as follows. In Section 2 we show any self-similar set
in $\mathcal{A}(\lambda, n, m)$ has graph-directed structure and obtain the
logarithmic commensurability of ratios for the dust-like self-similar set by
the approach of Falconer and Marsh \cite{F}. Using the dimension polynomials
and their irreducibility, we give the proof of Theorem 1 in Section 3.

\bigskip

\section{Logarithmic Commensurability of Ratios}

At first, we show that any self-similar set with exact overlaps will
generate a graph-directed construction.

\begin{lemma}
\label{l:graph}There are graph-directed sets $\{E_{i}\}_{i=1}^{u}$ with
ratio $\lambda $ satisfying the \textbf{SSC} and $E_{1}=E$.
\end{lemma}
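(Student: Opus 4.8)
The plan is to show that every self-similar set $E\in\mathcal{A}(\lambda,n,m)$ carries a graph-directed structure whose edges all have contraction ratio $\lambda$ and whose pieces satisfy the \textbf{SSC}. The natural object to track is the family of intervals $I_i=f_i(I)$ and, more precisely, the \emph{distinct} compact sets that arise when we iterate the maps $f_i$ while keeping account of the exact overlaps. The key point is that two overlapping maps $f_i$ and $f_{i+1}$ with $|I_i\cap I_{i+1}|/|I_i|=\lambda$ produce coinciding images at the next level, so the number of genuinely different "types" of pieces appearing at each scale stays bounded. I would therefore define the vertex set of the graph to be this finite collection of distinct sub-configurations (types of subsets of $E$), with $E_1=E$ as the root, and let each edge record which copy of which type, under a similitude of ratio $\lambda$, sits inside a given type at the next level.

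First I would make precise what a "type" is. Each $f_i(E)$ is a scaled copy of $E$ sitting inside $I_i$; because of the $m$ exact overlaps, certain pairs $f_i(E)$, $f_{i+1}(E)$ either coincide or their union must be treated as a single new set. I would group the first-level pieces $f_1(E),\dots,f_n(E)$ into maximal clusters of mutually overlapping pieces, so that distinct clusters are separated (this is where the hypothesis $|I_i\cap I_{i+1}|/|I_i|\in\{0,\lambda\}$ and the ordering $b_1<\cdots<b_n$ are used: overlaps only occur between consecutive intervals, so clusters are runs of consecutive indices). Each cluster is a union of finitely many $\lambda$-copies of $E$, and up to similarity there are only finitely many combinatorial patterns a cluster can have; these patterns are the vertices $E_1,\dots,E_u$. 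I would then verify that iterating the construction inside one cluster produces, at the next scale, sub-clusters each similar (with ratio $\lambda$) to one of the already-listed types, so the list of types is closed under the subdivision. This closure is exactly the statement that we have a finite graph-directed system with all ratios equal to $\lambda$.

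Next I would check the two defining properties of a graph-directed construction of the Mauldin--Williams type. The \textbf{SSC} for the pieces: within each type $E_j$, the immediate sub-pieces indexed by the outgoing edges are pairwise disjoint, because the overlaps have been absorbed into the definition of the clusters — every remaining intersection between consecutive intervals now has ratio $0$, i.e. is empty (or at most a single point that can be discarded by the usual argument). The fixed-point identity: each $E_j$ equals the union $\bigcup f_e(E_{t(e)})$ over its outgoing edges $e$, where $t(e)$ is the target type and $f_e$ the associated ratio-$\lambda$ similitude; and $E_1=E$ holds because the root cluster containing $f_1(E)$ reproduces $E$ under the grouping. The uniqueness theorem for graph-directed attractors then guarantees these sets are well defined.

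The main obstacle I expect is the bookkeeping that shows the set of types is genuinely finite and closed under subdivision, i.e. that no essentially new configuration keeps appearing as we descend. The danger is that overlaps could propagate and interact across cluster boundaries in a way that manufactures infinitely many patterns. I would control this by exploiting that every contraction ratio is the \emph{same} $\lambda$, so all pieces at a fixed level have equal size and their overlap structure is governed entirely by the finite data $\{b_i\}$ and the fixed value $\lambda$; a single scaling step maps the gap/overlap pattern of a cluster to a pattern of the same bounded complexity, and the hypothesis that only consecutive intervals can overlap prevents long-range interactions. Making this self-reproduction precise — ideally by an explicit description of the finitely many cluster shapes and the transition rule between them — is the technical heart of the lemma, and I would allocate most of the argument to it; the verification of the \textbf{SSC} and the self-referential identities would then follow routinely.
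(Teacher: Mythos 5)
Your overall architecture --- take as vertices the finitely many configurations obtained by grouping consecutive copies of $E$, decompose by subdividing each copy and regrouping, and read off a graph-directed system with all ratios $\lambda$ --- is the same as the paper's, whose collection $\mathcal{G}$ consists of finite chains $\cup_{i=1}^{k}(E+a_i)$ with $k\le n-1$ in which consecutive unit intervals either touch or overlap exactly. But there is a genuine flaw in how you form the clusters. You group first-level pieces by \emph{mutual overlap} and then assert that any remaining intersection between consecutive intervals ``has ratio $0$, i.e.\ is empty (or at most a single point that can be discarded by the usual argument).'' That point cannot be discarded. Since $b_1=0$ and $b_n=1-\lambda$, the endpoints $0$ and $1$ are fixed points of $f_1$ and $f_n$ and hence lie in $E$; so if $I_i$ and $I_{i+1}$ merely touch at a point $p$, then $p=f_i(1)=f_{i+1}(0)$ belongs to both $f_i(E)$ and $f_{i+1}(E)$, the two clusters containing these pieces intersect, and the \textbf{SSC} --- which demands genuinely empty intersections and is the entire content of the lemma --- fails for your decomposition. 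There is no ``usual argument'' that removes a shared point from a strict separation condition. The paper avoids this by building the touching configurations \emph{into} the vertices: a chain $G\in\mathcal{G}$ is allowed to have $|(I+a_i)\cap(I+a_{i+1})|=0$ with nonempty intersection as well as $=\lambda$, so that distinct pieces of the resulting decomposition are separated by gaps of positive length.

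Separately, you correctly identify the closure and finiteness of the family of cluster types as the technical heart, but you then leave it as a plan rather than an argument. This is where the actual content lies: one must show that when a chain of unit copies is subdivided, the maximal runs of touching-or-coinciding $\lambda$-copies again have uniformly bounded length (the paper bounds the chain length by $n-1$); this requires using the existence of a genuine gap among $I_1,\dots,I_n$ to prevent runs from lengthening across several unit copies at each generation. Until that bound is established, the vertex set is not known to be finite and no graph-directed system has been produced. So the proposal has the right skeleton, matching the paper's, but as written it neither secures the \textbf{SSC} nor completes the finiteness argument that the lemma asserts.
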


\begin{proof}
Consider the set $G$ in the following form
\begin{equation*}
G=\cup _{i=1}^{k}(E+a_{i})\text{ with }0=a_{1}<a_{2}<\cdots <a_{k}\text{ and
}k\leq n-1
\end{equation*}%
such that $(I+a_{i})\cap (I+a_{i+1})\neq \emptyset$ with $I=[0,1]$ for all $%
i\leq k-1$ satisfying
\begin{equation*}
|(I+a_{i})\cap (I+a_{i+1})|=0\text{ or }\lambda.
\end{equation*}

Let $\mathcal{G}$ be the collection of all sets in the form as above.

For every $G\in \mathcal{G},$ considering the natural decomposition at the
touched point ($|(I+a_{i})\cap (I+a_{i+1})|=0$) or on the exact overlapping (%
$|(I+a_{i})\cap (I+a_{i+1})|=\lambda$), we have the decomposition
\begin{equation*}
G=\bigcup _{G^{\prime }\in \mathcal{G}}\bigcup _{i}(\lambda G^{\prime
}+b_{i,G,G^{\prime }})  \label{ddd}
\end{equation*}%
which is a disjoint union. That means we obtain a graph directed
construction satisfying the \textbf{SSC}. In fact, we only need to choose a
subgraph generated by $E$ with $k=1.$
\end{proof}

The main result of this section is the following Proposition 1. We will use
the approach by Falconer and Marsh \cite{F}. In \cite{F}, the authors
discussed the dust-like self-similar sets, now we will deal with the
graph-directed sets.

\begin{proposition}
\label{P:1}Suppose $E\in\mathcal{A}(\lambda ,n,m)$ and $F=\cup
_{j=1}^{t}g_{j}(F)$ is a dust-like self-similar set such that $E\simeq F.$
Assume $r_{j}$ is the contractive ratio of $g_{j}$ for any $j.$ Then there
is a ratio $r\in (0,1)$ and positive integers $k$ and $k_{1}\leq k_{2}\leq
\cdots \leq k_{t}$ such that
\begin{equation*}
\lambda =r^{k},\text{ }r_{1}=r^{k_{1}},r_{2}=r^{k_{2}},\cdots
,r_{t}=r^{k_{t}}.
\end{equation*}
\end{proposition}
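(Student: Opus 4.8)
The plan is to follow the Falconer--Marsh strategy, transferring it from the dust-like setting to the graph-directed one supplied by Lemma \ref{l:graph}. First I would record the soft invariance properties. Since $E\simeq F$ and Lipschitz maps preserve Hausdorff dimension, $s:=\dim_H E=\dim_H F$, and both sets carry positive, finite $s$-dimensional Hausdorff measure. On the $F$ side the natural self-similar measure assigns the cylinder $F_v=g_{v_1}\cdots g_{v_\ell}(F)$ mass $r_v^{s}$ with $r_v=r_{v_1}\cdots r_{v_\ell}$ and diameter $\asymp r_v$; on the $E$ side the graph-directed construction of Lemma \ref{l:graph}, all of whose maps have ratio $\lambda$, assigns a depth-$\ell$ cylinder mass $\asymp\lambda^{\ell s}$ and diameter $\asymp\lambda^{\ell}$. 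Because the bi-Lipschitz bijection transports the one measure to a measure comparable to the other, comparable cylinders have comparable measures, and hence comparable diameters.

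The core of the argument is a scale-matching step. Using the \textbf{SSC} on both sides together with the bounded distortion of the bi-Lipschitz map $\phi\colon E\to F$, I would show that each cylinder $F_v$ is comparable, under $\phi^{-1}$, to a bounded number of level cylinders of $E$, and conversely. This yields uniform constants so that $r_v\asymp\lambda^{m(v)}$ for a suitable level $m(v)$; equivalently the additive semigroup of log-ratios $\{\log r_v\}=\{\sum_i n_i\log r_i:n_i\ge 0\}$ stays within bounded Hausdorff distance of the single arithmetic progression $\mathbb{Z}\,\log\lambda$, with the matching being two-sided. The decisive point is then to upgrade this bounded, uniform matching to exact commensurability, i.e. to $\log r_j\in\mathbb{Q}\,\log\lambda$ for every $j$.

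The obstacle, and the step I expect to be hardest, is exactly this upgrade. The structural dichotomy to exploit is that the log-scales of $E$ form a finite union of translates of $\mathbb{Z}\log\lambda$ --- so they sit on finitely many parallel lattices with fixed spacing $|\log\lambda|$ --- whereas if some ratio were incommensurable, say $\log r_i/\log\lambda\notin\mathbb{Q}$, the log-scales $\{\log r_v\}$ of $F$ would, by equidistribution, fill long windows with spacings tending to $0$. Running this quantitatively against the two-sided bounded matching (for instance by comparing, across $\phi$, the cylinder counts in thin logarithmic scale windows, where the lattice behaviour of $E$ and the non-lattice behaviour of $F$ differ) produces a contradiction once the window is taken deep enough relative to the distortion constant. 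Carrying out this comparison carefully --- the analogue for graph-directed sets of the counting argument in \cite{F} --- is the technical heart.

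Granting commensurability, the conclusion is immediate. If $\log r_1,\dots,\log r_t$ and $\log\lambda$ are pairwise commensurable, they all lie in a rank-one subgroup $\mathbb{Z}\,\log r$ of $(\mathbb{R},+)$; choosing the generator $r\in(0,1)$ and noting that $\lambda,r_1,\dots,r_t\in(0,1)$ forces negative logarithms, we obtain positive integers $k,k_1,\dots,k_t$ with $\lambda=r^{k}$ and $r_j=r^{k_j}$. Relabelling the $g_j$ so that $k_1\le k_2\le\cdots\le k_t$ gives the statement.
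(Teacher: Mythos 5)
Your setup and your final step (pairwise commensurable logarithms generate a rank-one subgroup $\mathbb{Z}\log r$, hence a common ratio $r$) match the paper, but the step you yourself flag as decisive --- upgrading the bounded two-sided scale matching to exact commensurability --- is where your proposed mechanism breaks down, and it is not the mechanism the paper uses. The lattice-versus-equidistribution counting you describe cannot yield a contradiction: the bi-Lipschitz map only forces each $F$-cylinder diameter to agree with the diameter of some $E$-cylinder up to the fixed multiplicative constant $\xi^{-1}$ (inequality (\ref{555}) in the paper), and every positive real number is within the fixed factor $\lambda^{-1}(n-1)$ of some $\lambda^{\ell}|E_{v}|$, so the statement that $\{\log r_{v}\}$ lies in a bounded neighbourhood of the $E$-lattice is vacuous and holds whether or not $\log r_{j}/\log\lambda$ is rational. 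Likewise, the number of cylinders of either set with diameter in a window $[\delta,2\delta]$ is $\asymp\delta^{-s}$ on both sides by regularity of the measures, with no visible difference between lattice and non-lattice ratio sets; and the logarithmic windows cannot be taken thinner than the distortion constant permits. So the ``technical heart'' as you have sketched it would not close.

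The paper's route (following Falconer and Marsh) is measure-theoretic and exact rather than metric and approximate. Lemma \ref{l:depth} shows that $f(F_{\mathbf{j}})$ is not merely comparable to an $E$-cylinder but is \emph{exactly} a finite union of $E$-cylinders at a uniformly bounded relative depth $N$ below the smallest cylinder $E_{\mathbf{e}_{\mathbf{j}}}$ containing it; consequently the density $h(F_{\mathbf{j}})=\mathcal{H}^{s}(f(F_{\mathbf{j}}))/\mathcal{H}^{s}(F_{\mathbf{j}})$ changes from parent to child by a factor lying in a fixed finite set $\Lambda$ (Lemma \ref{l:finite}). A near-supremum argument then produces a cylinder below which $h$ is constant (Lemma \ref{l:linear}), and running down the ray $\mathbf{j}\ast[j]^{k}$ and applying the pigeonhole principle to the finitely many possible sets $\Delta$ yields the exact identity $(r_{j})^{k-k^{\prime}}=\lambda^{|\mathbf{e}_{\mathbf{j}[j]^{k}}|-|\mathbf{e}_{\mathbf{j}[j]^{k^{\prime}}}|}$, whence $\log r_{j}/\log\lambda\in\mathbb{Q}$. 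Some such exact-identity mechanism, rather than an approximate counting one, is what you would need to supply to make your argument complete.
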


Without loss of generality, we only need to show that%
\begin{equation*}
\frac{\log r_{j}}{\log \lambda }\in \mathbb{Q},
\end{equation*}%
or $\frac{\log (r_{j})^{s}}{\log \lambda ^{s}}\in \mathbb{Q}$ with $s=\dim
_{H}E=\dim _{H}F.$ Suppose $f:F\rightarrow E$ is a bi-Lipschitz bijection
and $c\geq 1$ is a constant satisfying
\begin{equation*}
c^{-1}|x-y|\leq |f(x)-f(y)|\leq c|x-y|\text{ for all }x,y\in F.
\end{equation*}%
Denote $\Sigma ^{\ast }=\bigcup\nolimits_{k\geq 0}\{1,\cdots ,t\}^{k}.$ For
any $\mathbf{j}=j_{1}\cdots j_{k}\in \Sigma ^{\ast },$ we write $F_{\mathbf{j%
}}=g_{j_{1}\cdots j_{k}}(F).$

Suppose $\mathbf{e}$ is an admissible path of length $|\mathbf{e}|$ in the
directed graph beginning at vertex $v=b(\mathbf{e}),$ then
\begin{equation}  \label{333}
|E_{\mathbf{e}}|=\lambda ^{|\mathbf{e}|}|E_{v}|\text{ and }\mathcal{H}%
^{s}(E_{\mathbf{e}})=\lambda ^{s|\mathbf{e}|}\mathcal{H}^{s}(E_{v})=\lambda
^{s|\mathbf{e}|}\mathcal{H}^{s}(E_{b(\mathbf{e})}).
\end{equation}

Because of the \textbf{SSC }on $F,$ we assume that there is a constant $\xi
>0$ such that
\begin{equation}  \label{444}
d(F_{\mathbf{j}},F\backslash F_{\mathbf{j}})\geq \xi |F_{\mathbf{j}}|\text{
for all }\mathbf{j\in \Sigma^{\ast}},
\end{equation}%
and
\begin{equation}  \label{555}
\xi |E_{\mathbf{e}_{\mathbf{j}}}|\leq |F_{\mathbf{j}}|\leq \xi ^{-1}|E_{%
\mathbf{e}_{\mathbf{j}}}| \text{ for all }\mathbf{j\in \Sigma^{\ast}},
\end{equation}%
where we denote by $E_{\mathbf{e}_{\mathbf{j}}}(\subset E)$ the smallest
copy containing $f(F_{\mathbf{j}}).$

\begin{lemma}
\label{l:depth}There is a positive integer $N$ such that for any copy $F_{%
\mathbf{j}}$ of $F$ and smallest copy $E_{\mathbf{e}_{\mathbf{j}}}(\subset
E) $ containing $f(F_{\mathbf{j}}),$ there is a set $\Delta _{\mathbf{j}}$
composed of pathes $\mathbf{e}^{\prime }$ with length $N$ satisfying
\begin{equation*}
f(F_{\mathbf{j}})=\bigcup\nolimits_{\mathbf{e}^{\prime }\in \Delta _{\mathbf{%
j}}}E_{\mathbf{e}_{\mathbf{j}}\mathbf{\ast e}^{\prime }}.
\end{equation*}
\end{lemma}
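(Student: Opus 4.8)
The plan is to push the strong separation of $F$ through the bi-Lipschitz map $f$ so as to manufacture a definite gap around $f(F_{\mathbf{j}})$ inside $E$ whose width is comparable to $|E_{\mathbf{e}_{\mathbf{j}}}|$, and then to observe that any graph-directed subcopy of $E_{\mathbf{e}_{\mathbf{j}}}$ that is small enough relative to this gap must fall entirely inside or entirely outside $f(F_{\mathbf{j}})$; collecting the inside ones yields $\Delta_{\mathbf{j}}$. First I would transport the separation estimate. Since $f\colon F\to E$ is a bijection, $E\setminus f(F_{\mathbf{j}})=f(F\setminus F_{\mathbf{j}})$, so the lower Lipschitz bound together with (\ref{444}) and (\ref{555}) gives
\[
d\bigl(f(F_{\mathbf{j}}),\,E\setminus f(F_{\mathbf{j}})\bigr)\ \geq\ c^{-1}d\bigl(F_{\mathbf{j}},F\setminus F_{\mathbf{j}}\bigr)\ \geq\ c^{-1}\xi\,|F_{\mathbf{j}}|\ \geq\ c^{-1}\xi^{2}\,|E_{\mathbf{e}_{\mathbf{j}}}|=:\delta_{\mathbf{j}}.
\]
The essential feature here is that the gap $\delta_{\mathbf{j}}$ is comparable to $|E_{\mathbf{e}_{\mathbf{j}}}|$ with the constant $c^{-1}\xi^{2}$ \emph{independent of} $\mathbf{j}$, which is precisely what the smallest-copy estimate (\ref{555}) supplies.

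Next I would compare this gap to the size of deep subcopies. By (\ref{333}) every admissible continuation $\mathbf{e}'$ of length $N$ satisfies $|E_{\mathbf{e}_{\mathbf{j}}\ast \mathbf{e}'}|=\lambda^{N}|E_{\mathbf{e}_{\mathbf{j}}}|$ (finitely many vertex types, so any discrepancy between beginning- and terminal-vertex conventions costs only a fixed bounded factor and changes $N$ by a constant). Since $\lambda\in(0,1)$, I would fix once and for all an integer $N\geq 2$ with $\lambda^{N}<c^{-1}\xi^{2}$. This $N$ depends only on $c$, $\xi$ and $\lambda$, hence is uniform in $\mathbf{j}$, and it forces $|E_{\mathbf{e}_{\mathbf{j}}\ast \mathbf{e}'}|<\delta_{\mathbf{j}}$ for every $\mathbf{e}'$ of length $N$.

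Finally I would invoke an all-or-nothing dichotomy. Each subcopy $E_{\mathbf{e}_{\mathbf{j}}\ast \mathbf{e}'}$ lies in $E=f(F_{\mathbf{j}})\sqcup\bigl(E\setminus f(F_{\mathbf{j}})\bigr)$, and since its diameter is strictly smaller than $\delta_{\mathbf{j}}$ it cannot contain points of both pieces; thus $E_{\mathbf{e}_{\mathbf{j}}\ast \mathbf{e}'}\subseteq f(F_{\mathbf{j}})$ or $E_{\mathbf{e}_{\mathbf{j}}\ast \mathbf{e}'}\cap f(F_{\mathbf{j}})=\emptyset$. Using $E_{\mathbf{e}_{\mathbf{j}}}=\bigcup_{|\mathbf{e}'|=N}E_{\mathbf{e}_{\mathbf{j}}\ast \mathbf{e}'}$ together with the minimality $f(F_{\mathbf{j}})\subseteq E_{\mathbf{e}_{\mathbf{j}}}$, the set $\Delta_{\mathbf{j}}=\{\mathbf{e}'\colon E_{\mathbf{e}_{\mathbf{j}}\ast \mathbf{e}'}\subseteq f(F_{\mathbf{j}})\}$ satisfies $f(F_{\mathbf{j}})=\bigcup_{\mathbf{e}'\in\Delta_{\mathbf{j}}}E_{\mathbf{e}_{\mathbf{j}}\ast \mathbf{e}'}$, with $\Delta_{\mathbf{j}}\neq\emptyset$ because $f(F_{\mathbf{j}})\neq\emptyset$. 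Given that (\ref{555}) is already in hand, the concluding pigeonhole is routine; the only point demanding care is the \emph{uniformity} of $N$, and I expect that to be the true locus of the argument, since without the two-sided comparison (\ref{555}) the host copy $E_{\mathbf{e}_{\mathbf{j}}}$ could be arbitrarily large relative to $f(F_{\mathbf{j}})$ and the required depth would blow up with $\mathbf{j}$.
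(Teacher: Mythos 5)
Your proposal is correct and follows essentially the same route as the paper: transport the strong separation of $F$ through $f$ via (\ref{444})--(\ref{555}) to get a gap of size $c^{-1}\xi^{2}|E_{\mathbf{e}_{\mathbf{j}}}|$, then choose $N$ uniformly so that level-$N$ subcopies are smaller than this gap (the paper's $N=[\frac{\log c^{-1}\xi ^{2}(n-1)^{-1}}{\log \lambda }]+1$ includes exactly the bounded vertex-diameter factor $n-1$ you flag), forcing the all-or-nothing dichotomy.
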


\begin{proof}
Now let $N=[\frac{\log c^{-1}\xi ^{2}(n-1)^{-1}}{\log \lambda }]+1.$ It
suffices to show that if $z\in E_{\mathbf{e}_{\mathbf{j}}\mathbf{\ast e}%
^{\prime }}$ with $E_{\mathbf{e}_{\mathbf{j}}\mathbf{\ast e}^{\prime }}\cap
f(F_{\mathbf{j}})\neq \emptyset $ then $z\in f(F_{\mathbf{j}}).$ In fact, if
$z\in f(F\backslash F_{\mathbf{j}})$ and $z^{\prime }\in E_{\mathbf{e}_{%
\mathbf{j}}\mathbf{\ast e}^{\prime }}\cap f(F_{\mathbf{j}}),$ by (\ref{444}%
)-(\ref{555}) we have%
\begin{equation*}
|z-z^{\prime }|\geq d(f(F_{\mathbf{j}}),f(F\backslash F_{\mathbf{j}}))\geq
c^{-1}\xi |F_{\mathbf{j}}|\geq c^{-1}\xi ^{2}|E_{\mathbf{e}_{\mathbf{j}}}|.
\end{equation*}%
On the other hand, using (\ref{333}) and the fact that $1=|E|\leq
|E_{v}|\leq n-1$, we have%
\begin{equation*}
|z-z^{\prime }|\leq |E_{\mathbf{e}_{\mathbf{j}}\mathbf{\ast e}^{\prime
}}|\leq \lambda ^{N}(n-1)|E_{\mathbf{e}_{\mathbf{j}}}|<c^{-1}\xi ^{2}|E_{%
\mathbf{e}_{\mathbf{j}}}|,
\end{equation*}%
this is a contradiction.
\end{proof}

For any Borel set $B\subset F,$ we let
\begin{equation*}
h(B)=\frac{\mathcal{H}^{s}(f(B))}{\mathcal{H}^{s}(B)}.
\end{equation*}
Since $f:F\rightarrow E$ is bi-Lipschitz, we have
\begin{equation*}
d=\sup\limits_{\mathbf{j}\in \Sigma ^{\ast }}h(F_{\mathbf{j}})<\infty.
\end{equation*}

\begin{lemma}
\label{l:finite}There is a finite set $\Lambda $ such that
\begin{equation*}
\frac{h(F_{\mathbf{j}\ast j})}{h(F_{\mathbf{j}})}\in \Lambda
\end{equation*}%
for all $\mathbf{j}\in \Sigma ^{\ast }$ and all $j\in \{1,\cdots ,t\}.$
\end{lemma}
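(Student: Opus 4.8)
The plan is to derive a closed form for $h(F_{\mathbf{j}})$ and then show that in the quotient $h(F_{\mathbf{j}\ast j})/h(F_{\mathbf{j}})$ each constituent factor ranges over a finite set, following the scheme of Falconer and Marsh. Write $\rho_{\mathbf{j}}=r_{j_{1}}\cdots r_{j_{k}}$ for $\mathbf{j}=j_{1}\cdots j_{k}$, so that $\mathcal{H}^{s}(F_{\mathbf{j}})=\rho_{\mathbf{j}}^{s}\mathcal{H}^{s}(F)$ by the \textbf{SSC} on $F$. For the image, Lemma \ref{l:depth} writes $f(F_{\mathbf{j}})$ as a union $\bigcup_{\mathbf{e}'\in\Delta_{\mathbf{j}}}E_{\mathbf{e}_{\mathbf{j}}\ast\mathbf{e}'}$ of copies carried by paths of common length $|\mathbf{e}_{\mathbf{j}}|+N$ all beginning at the vertex $b(\mathbf{e}_{\mathbf{j}})$; this union is disjoint by the \textbf{SSC} of the graph-directed construction, so (\ref{333}) gives
\begin{equation*}
\mathcal{H}^{s}(f(F_{\mathbf{j}}))=|\Delta_{\mathbf{j}}|\,\lambda^{s(|\mathbf{e}_{\mathbf{j}}|+N)}\mathcal{H}^{s}(E_{b(\mathbf{e}_{\mathbf{j}})}).
\end{equation*}
Combining the two identities yields
\begin{equation*}
\frac{h(F_{\mathbf{j}\ast j})}{h(F_{\mathbf{j}})}=\frac{|\Delta_{\mathbf{j}\ast j}|}{|\Delta_{\mathbf{j}}|}\cdot\frac{\mathcal{H}^{s}(E_{b(\mathbf{e}_{\mathbf{j}\ast j})})}{\mathcal{H}^{s}(E_{b(\mathbf{e}_{\mathbf{j}})})}\cdot\lambda^{s(|\mathbf{e}_{\mathbf{j}\ast j}|-|\mathbf{e}_{\mathbf{j}}|)}\cdot r_{j}^{-s},
\end{equation*}
and it then suffices to prove each of these four factors lies in a finite set.

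The first two factors are immediate. Every path in $\Delta_{\mathbf{j}}$ has the fixed length $N$, so $|\Delta_{\mathbf{j}}|$ is bounded by the finite number of admissible paths of length $N$ in the graph; since $|\Delta_{\mathbf{j}}|\geq 1$, the ratio $|\Delta_{\mathbf{j}\ast j}|/|\Delta_{\mathbf{j}}|$ takes only finitely many rational values. Likewise $b(\mathbf{e}_{\mathbf{j}})$ and $b(\mathbf{e}_{\mathbf{j}\ast j})$ lie among the finitely many vertices $\{1,\dots,u\}$, so $\mathcal{H}^{s}(E_{b(\mathbf{e}_{\mathbf{j}})})$ assumes finitely many positive values and the second factor is confined to a finite set. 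The factor $r_{j}^{-s}$ is harmless because $j\in\{1,\dots,t\}$.

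The crux is the third factor, i.e.\ showing that the integer exponent $|\mathbf{e}_{\mathbf{j}\ast j}|-|\mathbf{e}_{\mathbf{j}}|$ takes only finitely many values; here I would use the bi-Lipschitz sandwich (\ref{555}) together with bounded geometry. Since $|E_{\mathbf{e}_{\mathbf{j}}}|=\lambda^{|\mathbf{e}_{\mathbf{j}}|}|E_{b(\mathbf{e}_{\mathbf{j}})}|$ with $1\leq|E_{b(\mathbf{e}_{\mathbf{j}})}|\leq n-1$ and $|F_{\mathbf{j}}|=\rho_{\mathbf{j}}|F|$, the estimate (\ref{555}) forces $\lambda^{|\mathbf{e}_{\mathbf{j}}|}\asymp\rho_{\mathbf{j}}$ with comparison constants depending only on $\xi$, $|F|$ and $n$; the same holds for $\mathbf{j}\ast j$ with $\rho_{\mathbf{j}\ast j}=\rho_{\mathbf{j}}r_{j}$. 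Dividing the two comparisons gives $\lambda^{|\mathbf{e}_{\mathbf{j}\ast j}|-|\mathbf{e}_{\mathbf{j}}|}\asymp r_{j}$ with those same universal constants, so after taking logarithms $|\mathbf{e}_{\mathbf{j}\ast j}|-|\mathbf{e}_{\mathbf{j}}|$ is pinned into an interval whose position depends only on $j$ and whose length is independent of $\mathbf{j}$ and $j$. Being an integer constrained to one of the $t$ such bounded intervals, it can assume only finitely many values, whence $\lambda^{s(|\mathbf{e}_{\mathbf{j}\ast j}|-|\mathbf{e}_{\mathbf{j}}|)}$ is finite-valued. The main obstacle is precisely this step, namely converting the metric comparison (\ref{555}) into the purely combinatorial boundedness of the depth difference; once it is in place, the four factors jointly range over a finite set $\Lambda$, which is the assertion.
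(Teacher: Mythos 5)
Your proposal is correct and follows essentially the same route as the paper: both express $\mathcal{H}^{s}(f(F_{\mathbf{j}}))$ via the depth-$N$ decomposition of Lemma \ref{l:depth} and then factor $h(F_{\mathbf{j}\ast j})/h(F_{\mathbf{j}})$ into pieces each ranging over a finite set, and your justification that $|\mathbf{e}_{\mathbf{j}\ast j}|-|\mathbf{e}_{\mathbf{j}}|$ is bounded via (\ref{555}) supplies a detail the paper only asserts. One small inaccuracy: your closed form $\mathcal{H}^{s}(f(F_{\mathbf{j}}))=|\Delta_{\mathbf{j}}|\,\lambda^{s(|\mathbf{e}_{\mathbf{j}}|+N)}\mathcal{H}^{s}(E_{b(\mathbf{e}_{\mathbf{j}})})$ presumes all pieces $E_{\mathbf{e}_{\mathbf{j}}\ast\mathbf{e}^{\prime}}$ have equal measure, whereas in a graph-directed construction the measure of a cylinder depends on the vertex the path terminates at, so the correct expression is $\lambda^{s(|\mathbf{e}_{\mathbf{j}}|+N)}\sum_{\mathbf{e}^{\prime}\in\Delta_{\mathbf{j}}}\mathcal{H}^{s}(E_{v(\mathbf{e}^{\prime})})$; this still takes only finitely many values (as $\Delta_{\mathbf{j}}$ runs over subsets of the finitely many length-$N$ paths), so the conclusion is unaffected.
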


\begin{proof}
We note that%
\begin{equation*}
\frac{h(F_{\mathbf{j}\ast j})}{h(F_{\mathbf{j}})}=\frac{\mathcal{H}^{s}(f(F_{%
\mathbf{j}\ast j}))/\mathcal{H}^{s}(F_{\mathbf{j}\ast j})}{\mathcal{H}%
^{s}(f(F_{\mathbf{j}}))/\mathcal{H}^{s}(F_{\mathbf{j}})}=\frac{\mathcal{H}%
^{s}(F_{\mathbf{j}})}{\mathcal{H}^{s}(F_{\mathbf{j}\ast j})}\cdot \frac{%
\mathcal{\lambda }^{s|\mathbf{e}_{\mathbf{j}\ast j}|}}{\mathcal{\lambda }^{s|%
\mathbf{e}_{\mathbf{j}}|}}\cdot \frac{\mathcal{H}^{s}(f(F_{\mathbf{j}\ast
j}))/\mathcal{\lambda }^{s|\mathbf{e}_{\mathbf{j}\ast j}|}}{\mathcal{H}%
^{s}(f(F_{\mathbf{j}}))/\mathcal{\lambda }^{s|\mathbf{e}_{\mathbf{j}}|}}.
\end{equation*}%
Now, $\frac{\mathcal{H}^{s}(F_{\mathbf{j}})}{\mathcal{H}^{s}(F_{\mathbf{j}%
\ast j})}\in \{(r_{j})^{-s}\}_{j=1}^{t}.$ Suppose $M$ is a upper bound for
difference of lengths of $\mathbf{e}_{\mathbf{j}\ast j}$ and $\mathbf{e}_{%
\mathbf{j}},$ we have%
\begin{equation*}
\frac{\mathcal{\lambda }^{s|\mathbf{e}_{\mathbf{j}\ast j}|}}{\mathcal{%
\lambda }^{s|\mathbf{e}_{\mathbf{j}}|}}\in \{\lambda ^{sk}:k\leq M\}
\end{equation*}%
which is a finite set. By Lemma \ref{l:depth}, we also obtain that%
\begin{eqnarray*}
\frac{\mathcal{H}^{s}(f(F_{\mathbf{j}}))}{\mathcal{\lambda }^{s|\mathbf{e}_{%
\mathbf{j}}|}} &=&\frac{\sum_{\mathbf{e}^{\prime }\in \Delta _{\mathbf{j}}}%
\mathcal{H}^{s}(E_{\mathbf{e}_{\mathbf{j}}\mathbf{\ast e}^{\prime }})}{%
\mathcal{\lambda }^{s|\mathbf{e}_{\mathbf{j}}|}} \\
&=&\mathcal{\lambda }^{s(|\mathbf{e}_{\mathbf{j}}|+N)}\frac{\sum_{\mathbf{e}%
^{\prime }\in \Delta _{\mathbf{j}}}\mathcal{H}^{s}(E_{b(\mathbf{e}^{\prime
})})}{\mathcal{\lambda }^{s|\mathbf{e}_{\mathbf{j}}|}} \\
&\in &\lambda ^{sN}\left\{ \sum_{\mathbf{e}^{\prime }\in \Delta }\mathcal{H}%
^{s}(E_{b(\mathbf{e}^{\prime })}):\Delta \subset \{\mathbf{e}^{\prime }:|%
\mathbf{e}^{\prime }|=N\}\right\}
\end{eqnarray*}%
which is also a finite set.
\end{proof}

\begin{lemma}
\label{l:linear}There is a copy $F_{j_{1}\cdots j_{k^{\ast }}}$ of $F$ and a
constant $\bar{d}>0$ such that
\begin{equation}
\frac{\mathcal{H}^{s}(f(B))}{\mathcal{H}^{s}(B)}=\bar{d}  \label{vvv}
\end{equation}%
for Borel set $B\subset F_{j_{1}\cdots j_{k^{\ast }}}.$
\end{lemma}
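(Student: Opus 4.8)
The plan is to exploit the fact that $h$ behaves like a harmonic (martingale-type) quantity under the subdivision of cylinders, and then to combine this with the finiteness coming from Lemma~\ref{l:finite} to upgrade an \emph{approximate} maximum of $h$ into an \emph{exact} one that propagates to all descendants. First I would record the basic recursion. Since $F$ is dust-like we have $\sum_{j=1}^{t}r_{j}^{s}=1$ and $0<\mathcal{H}^{s}(F)<\infty$; moreover $F_{\mathbf{j}}=\bigsqcup_{j}F_{\mathbf{j}\ast j}$ is a disjoint union by the \textbf{SSC}, and $f$ is a bijection, so $f(F_{\mathbf{j}})=\bigsqcup_{j}f(F_{\mathbf{j}\ast j})$. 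Taking $\mathcal{H}^{s}$ and dividing by $\mathcal{H}^{s}(F_{\mathbf{j}})$ gives
\[
h(F_{\mathbf{j}})=\sum_{j=1}^{t}r_{j}^{s}\,h(F_{\mathbf{j}\ast j}),
\]
so $h(F_{\mathbf{j}})$ is a convex combination of its children's values with fixed positive weights $r_{j}^{s}$ summing to $1$. In particular $h(F_{\mathbf{j}})\le\max_{j}h(F_{\mathbf{j}\ast j})\le d$.

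Next I would take a maximizing sequence $h(F_{\mathbf{j}_{n}})\to d$. Because the weights are fixed and positive and each child value is at most $d$, the convex-combination identity forces $h(F_{\mathbf{j}_{n}\ast j})\to d$ for every $j$, whence each ratio $h(F_{\mathbf{j}_{n}\ast j})/h(F_{\mathbf{j}_{n}})\to 1$. By Lemma~\ref{l:finite} these ratios lie in the finite set $\Lambda$, and a sequence in a finite set that converges to $1$ must be eventually equal to $1$; thus $1\in\Lambda$. I then fix $\delta>0$ with $(1-\delta,1+\delta)\cap\Lambda=\{1\}$.

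The heart of the argument is a self-improving estimate. I claim there is $\eta>0$, depending only on $\delta$ and $r_{\min}=\min_{j}r_{j}$, such that whenever a cylinder satisfies $h(F_{\mathbf{j}})>d(1-\eta)$, every child ratio is forced into $(1-\delta,1+\delta)$, hence equals $1$, so $h(F_{\mathbf{j}\ast j})=h(F_{\mathbf{j}})$ for all $j$. Indeed, from the convex combination and $h(F_{\mathbf{j}\ast l})\le d$ one gets $h(F_{\mathbf{j}\ast j})> d\bigl(1-\eta/r_{\min}^{s}\bigr)$, while $h(F_{\mathbf{j}\ast j})\le d$ together with $h(F_{\mathbf{j}})>d(1-\eta)$ bounds the ratio above by $1/(1-\eta)$; choosing $\eta$ small makes both bounds lie in $(1-\delta,1+\delta)$. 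The decisive point is that each child then again satisfies $h(F_{\mathbf{j}\ast j})=h(F_{\mathbf{j}})>d(1-\eta)$, so the hypothesis reproduces itself and the conclusion propagates by induction to every descendant, giving $h(F_{\mathbf{j}\ast\mathbf{i}})=h(F_{\mathbf{j}})$ for all $\mathbf{i}\in\Sigma^{\ast}$.

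Finally I would fix any cylinder $F_{j_{1}\cdots j_{k^{\ast}}}$ with $h(F_{j_{1}\cdots j_{k^{\ast}}})>d(1-\eta)$, which exists since $d$ is the supremum, and set $\bar{d}=h(F_{j_{1}\cdots j_{k^{\ast}}})$. The two finite Borel measures $B\mapsto\mathcal{H}^{s}(f(B))$ and $B\mapsto\bar{d}\,\mathcal{H}^{s}(B)$ on $F_{j_{1}\cdots j_{k^{\ast}}}$ then agree on every sub-cylinder by the propagation just proved; since the sub-cylinders form a $\pi$-system generating the Borel $\sigma$-algebra, the two measures coincide, which is exactly $(\ref{vvv})$. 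The main obstacle is precisely the self-improving step that converts the asymptotic equality $h(F_{\mathbf{j}_{n}\ast j})/h(F_{\mathbf{j}_{n}})\to 1$ into exact equality on a \emph{fixed} cylinder together with all of its descendants; the finiteness of $\Lambda$, and hence the isolation of the value $1$, is what makes this possible, and carrying the single constant $\eta$ through the induction is the only delicate bookkeeping.
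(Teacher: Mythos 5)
Your proof is correct and follows essentially the same route as the paper: both arguments combine the convex‑combination identity $h(F_{\mathbf{j}})=\sum_{j}r_{j}^{s}h(F_{\mathbf{j}\ast j})$ with the finiteness of $\Lambda$ from Lemma~\ref{l:finite} to show that on a cylinder where $h$ is sufficiently close to the supremum $d$, every child ratio must equal $1$, and then propagate this to all descendants. The only cosmetic difference is that you isolate the value $1$ in $\Lambda$ by a $\delta$-neighborhood, whereas the paper bounds any sub-unit ratio away from $1$ by $\alpha=\max\bigl((-\infty,1)\cap\Lambda\bigr)$ and derives equality directly from the convex combination; the mechanism is the same.
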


\begin{proof}
Suppose $\alpha=\max_{x\in (-\infty ,1)\cap \Lambda }x<1\ $or $\alpha=1/2$
if $(-\infty ,1)\cap \Lambda =\emptyset .$ Take $\epsilon >0$ such that
\begin{equation}
\max_{i}(\alpha r_{i}^{s}+(1+\epsilon )(1-r_{i}^{s}))<1.  \label{aaa}
\end{equation}

Let $d=\sup\limits_{\mathbf{j}\in \Sigma ^{\ast }}h(F_{\mathbf{j}})<\infty $
and take a sequence $\mathbf{j=}j_{1}\cdots j_{k^{\ast }}$ such that $\frac{d%
}{h(F_{\mathbf{j}})}<1+\epsilon $. We notice that
\begin{equation*}
\bar{d}\hat{=}h(F_{\mathbf{j}})=\sum_{j}\frac{\mathcal{H}^{s}(F_{\mathbf{j}%
\ast j})}{\mathcal{H}^{s}(F_{\mathbf{j}})}h(F_{\mathbf{j}\ast j})\text{ with
}\sum_{j}\frac{\mathcal{H}^{s}(F_{\mathbf{j}\ast j})}{\mathcal{H}^{s}(F_{%
\mathbf{j}})}=\sum_{j}(r_{j})^{s}=1,
\end{equation*}%
i.e., we have%
\begin{equation}
1=\sum_{j}(r_{j})^{s}\frac{h(F_{\mathbf{j}\ast j})}{h(F_{\mathbf{j}})}\text{
with }\sum_{j}(r_{j})^{s}=1,  \label{ccc}
\end{equation}

We will first show that $h(F_{\mathbf{j}\ast j})\geq h(F_{\mathbf{j}})$ for
all $j.$ Otherwise, without loss of generality, we assume that $\frac{h(F_{%
\mathbf{j}\ast 1})}{h(F_{\mathbf{j}})}<1$. Then
\begin{equation*}
\frac{h(F_{\mathbf{j}\ast 1})}{h(F_{\mathbf{j}})}\leq \alpha\text{ and }%
\frac{h(F_{\mathbf{j}\ast j})}{h(F_{\mathbf{j}})}\leq \frac{d}{h(F_{\mathbf{j%
}})}<1+\epsilon \text{ for }j\geq 2.
\end{equation*}%
It follows from (\ref{aaa}) that%
\begin{equation*}
1=\sum_{j}(r_{j})^{s}\frac{h(F_{\mathbf{j}\ast j})}{h(F_{\mathbf{j}})}\leq
\alpha r_{1}^{s}+(1+\epsilon )(1-r_{1}^{s})<1,
\end{equation*}%
this is a contradiction. Now $h(F_{\mathbf{j}\ast j})\geq h(F_{\mathbf{j}})$
for all $j,$ by (\ref{ccc}) we obtain that
\begin{equation*}
h(F_{\mathbf{j}\ast j})=h(F_{\mathbf{j}})=\bar{d} \text{ for all }j.
\end{equation*}

In the same way, we have
\begin{equation*}
h(F_{\mathbf{j}\ast j_{1}\ast j_{2}})=h(F_{\mathbf{j}})=\bar{d}\text{ for
all }j_{1},j_{2}.
\end{equation*}%
Again and again, we obtain
\begin{equation*}
h(F_{\mathbf{j}^{\prime }})=\bar{d} \text{ for any }\mathbf{j}^{\prime }
\text{ with prefix } \mathbf{j}.
\end{equation*}
Then (\ref{vvv}) follows.
\end{proof}

\begin{proof}[Proof of Proposition \protect\ref{P:1}]
$\ $

Take $\mathbf{j=}j_{1}\cdots j_{k^{\ast }}$ in Lemma \ref{l:linear}. For any
$j,$ we consider the sequence $\mathbf{j}[j]^{k}=\mathbf{j}\ast [j]^{k}$
where the sequence $[j]^{k} $ is composed of $k$ successive digits $j.$ Then
\begin{equation*}
\frac{h(F_{\mathbf{j}[j]^{k^{\prime }}})}{h(F_{\mathbf{j}[j]^{k}})}=1\text{
with }k>k^{\prime }.
\end{equation*}
Hence we obtain that
\begin{eqnarray*}
(r_{j}^{s})^{k-k^{\prime }}=\frac{\mathcal{H}^{s}(F_{\mathbf{j}[j]^{k}})}{%
\mathcal{H}^{s}(F_{\mathbf{j}[j]^{k\prime }})} &=&\frac{h(F_{\mathbf{j}%
[j]^{k^{\prime }}})}{h(F_{\mathbf{j}[j]^{k}})}\cdot \frac{\sum_{\mathbf{e}%
^{\prime }\in \Delta _{\mathbf{j}[j]^{k}}}\mathcal{H}^{s}(E_{b(\mathbf{e}%
^{\prime })})}{\sum_{\mathbf{e}^{\prime }\in \Delta _{\mathbf{j}%
[j]^{k^{\prime }}}}\mathcal{H}^{s}(E_{b(\mathbf{e}^{\prime })})}\cdot
\lambda ^{s(|\mathbf{e}_{\mathbf{j}[j]^{k}}|-|\mathbf{e}_{\mathbf{j}%
[j]^{k^{\prime }}}|)} \\
&=&\frac{\sum_{\mathbf{e}^{\prime }\in \Delta _{\mathbf{j}[j]^{k}}}\mathcal{H%
}^{s}(E_{b(\mathbf{e}^{\prime })})}{\sum_{\mathbf{e}^{\prime }\in \Delta _{%
\mathbf{j}[j]^{k^{\prime }}}}\mathcal{H}^{s}(E_{b(\mathbf{e}^{\prime })})}%
\cdot \lambda ^{s(|\mathbf{e}_{\mathbf{j}[j]^{k}}|-|\mathbf{e}_{\mathbf{j}%
[j]^{k^{\prime }}}|)}.
\end{eqnarray*}%
From the finiteness, we can find $k\neq k^{\prime }$ such that $\Delta _{%
\mathbf{j}[j]^{k}}=\Delta _{\mathbf{j}[j]^{k\prime }}$ then
\begin{equation*}
(r_{j}^{s})^{k-k^{\prime }}=\lambda ^{s(|\mathbf{e}_{\mathbf{j}[j]^{k}}|-|%
\mathbf{e}_{\mathbf{j}[j]^{k^{\prime }}}|)},
\end{equation*}%
that means $(r_{j})^{k-k^{\prime }}=\lambda ^{|\mathbf{e}_{\mathbf{j}%
[j]^{k}}|-|\mathbf{e}_{\mathbf{j}[j]^{k^{\prime }}}|},$ i.e.,
\begin{equation*}
\log r_{j}/\log \lambda \in \mathbb{Q}
\end{equation*}%
for all $j.$ Then Proposition 1 is proved.
\end{proof}

\bigskip

\section{Proof of Theorem}

\subsection{Dimension polynomials}

\

From \cite{jiang} we have
\begin{equation*}
P(x)=x^{2}-nx+m=(x-\beta )(x-\beta ^{\prime })\text{ with }|\beta ^{\prime
}|<1<\beta .
\end{equation*}%
Using notations in Proposition \ref{P:1}, we consider the following two
polynomials
\begin{equation}
\bar{P}(x)=P(x^{k})\text{ and }\bar{Q}(x)=x^{k_{t}}-%
\sum_{i=1}^{t}x^{k_{t}-k_{i}}.  \label{999}
\end{equation}

\begin{proposition}
\label{P:2}Let $s=\dim _{H}E=\dim _{H}F$ and $r$ the ratio in Proposition %
\ref{P:1}. Then%
\begin{equation*}
\bar{P}(r^{-s})=\bar{Q}(r^{-s})=0.
\end{equation*}
\end{proposition}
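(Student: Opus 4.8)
The goal is to show that the two polynomials in (\ref{999}) both vanish at $r^{-s}$. The key inputs are the dimension formulas for $E$ and $F$ together with the ratio relations $\lambda = r^k$ and $r_j = r^{k_j}$ furnished by Proposition \ref{P:1}. My plan is to compute $r^{-s}$ directly from each dimension characterization, using the fact that $s = \dim_H E = \dim_H F$, and to recognize the resulting algebraic identities as the statements $\bar P(r^{-s})=0$ and $\bar Q(r^{-s})=0$.

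\textbf{The $\bar Q$ equation.} I would start with $F$, since the dust-like hypothesis makes this the cleaner half. Because $F$ satisfies the \textbf{SSC}, the open set condition holds and the dimension formula (\ref{dim}) applies: $\sum_{j=1}^{t} r_j^{\,s} = 1$. Now substitute $r_j = r^{k_j}$, writing $y = r^{-s}$ so that $r_j^{\,s} = (r^{s})^{k_j} = y^{-k_j}$. The dimension equation becomes
\begin{equation*}
\sum_{j=1}^{t} y^{-k_j} = 1.
\end{equation*}
Multiplying through by $y^{k_t}$ (the largest exponent, since $k_1 \le \cdots \le k_t$) gives exactly
\begin{equation*}
y^{k_t} - \sum_{j=1}^{t} y^{k_t - k_j} = 0,
\end{equation*}
which is precisely $\bar Q(y) = \bar Q(r^{-s}) = 0$. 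This step is essentially a bookkeeping substitution and should present no difficulty.

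\textbf{The $\bar P$ equation.} For $E$, I would invoke the dimension result quoted from \cite{jiang}: $\dim_H E = \frac{\log\beta}{-\log\lambda}$ where $\beta$ is the larger root of $P(x)=x^2-nx+m$. Since $s=\dim_H E$, this reads $s = \frac{\log\beta}{-\log\lambda}$, equivalently $\lambda^{-s} = \beta$, so that $\beta = \lambda^{-s}$ is a root of $P$. Now use $\lambda = r^k$ to get $\lambda^{-s} = (r^{-s})^k = y^k$. Thus $y^k = \lambda^{-s} = \beta$ satisfies $P(\beta)=0$, i.e.
\begin{equation*}
P\bigl((r^{-s})^k\bigr) = 0,
\end{equation*}
which is exactly $\bar P(r^{-s}) = P(y^k) = y^{2k} - n y^k + m = 0$. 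The only point needing a word of care is confirming that it is the \emph{dominant} root $\beta = \lambda^{-s} > 1$ that appears (not $\beta'$), which follows directly from $\lambda \in (0,1)$ and $s>0$ giving $\lambda^{-s}>1 = $ the threshold separating $|\beta'|<1<\beta$.

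\textbf{Anticipated obstacle.} The mathematical content here is light; both identities are immediate once the ratio dictionary from Proposition \ref{P:1} is in hand. The one genuine subtlety I would watch is the justification that $F$ obeys the open set condition so that (\ref{dim}) is legitimately applicable --- the \textbf{SSC} assumed in Proposition \ref{P:1} indeed implies \textbf{OSC}, as noted in the introduction, so this is secured. Beyond that, the proof is a direct translation of the two known dimension formulas into a common variable $y=r^{-s}$, and I expect no serious difficulty.
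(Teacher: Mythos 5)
Your proposal is correct and follows essentially the same route as the paper: it derives $P(\lambda^{-s})=0$ from the dimension result of \cite{jiang}, derives $\sum_j r_j^{\,s}=1$ from the \textbf{SSC} on $F$, and then substitutes the relations $\lambda=r^k$, $r_j=r^{k_j}$ from Proposition \ref{P:1}. You merely write out explicitly the substitution that the paper leaves as ``the proposition follows the relations in Proposition \ref{P:1}.''
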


\begin{proof}
It follows from \cite{jiang} that for $s=\dim _{H}E,$%
\begin{equation*}
(\lambda ^{-s})^{2}-n(\lambda ^{-s})+m=0.
\end{equation*}%
On the other hand, for $s=\dim_{H} F,$ by the \textbf{SSC} we have
\begin{equation*}
\sum\nolimits_{i=1}^{t}(r_{i})^{s}=1.
\end{equation*}%
Then the proposition follows the relations in Proposition \ref{P:1}.
\end{proof}

\subsection{Irreducibility of polynomial}

\begin{proposition}
\label{P:4} For any $Q\in \{x^{p}-\sum_{i=0}^{p-1}b_{i}x^{i}:p\geq
1,b_{i}\in \mathbb{Z}$ and $b_{i}\geq 0\},$ we have
\begin{equation*}
P(x^{q})\nmid Q(x)
\end{equation*}
\end{proposition}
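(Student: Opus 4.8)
The plan is to exploit a sharp asymmetry in the number of \emph{positive real} roots of the two sides: $P(x^{q})$ always has two of them, while every $Q$ in the prescribed class has at most one, so $P(x^{q})$ cannot divide $Q$. The passage $x\mapsto x^{q}$ will be used only through the fact that it maps distinct positive reals to distinct positive reals.

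First I would pin down the roots of $P(x)=x^{2}-nx+m$. Since $\beta$ is a real Pisot number, its conjugate $\beta'=n-\beta$ is real as well; from $\beta\beta'=m>0$ and $\beta>1>0$ we get $\beta'>0$, and together with $|\beta'|<1<\beta$ this gives $0<\beta'<1<\beta$. Writing $P(x^{q})=(x^{q}-\beta)(x^{q}-\beta')$, the real positive $q$-th roots $\beta^{1/q}$ and $(\beta')^{1/q}$ are therefore genuine roots of $P(x^{q})$, and they are distinct because $\beta\neq\beta'$ and $t\mapsto t^{q}$ is injective on $(0,\infty)$. Thus $P(x^{q})$ has (at least) two distinct positive real roots.

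Next I would bound the positive real roots of $Q$. For $Q(x)=x^{p}-\sum_{i=0}^{p-1}b_{i}x^{i}$ with all $b_{i}\geq 0$, the coefficient sequence changes sign at most once (the leading $+1$ followed by nonpositive lower coefficients), so by Descartes' rule of signs $Q$ has at most one positive real root. Equivalently, for $t>0$ the quantity $Q(t)/t^{p}=1-\sum_{i=0}^{p-1}b_{i}t^{\,i-p}$ is strictly increasing whenever some $b_{i}>0$, since each $t^{\,i-p}$ is strictly decreasing, and hence vanishes at most once. The degenerate case $Q(x)=x^{p}$ is immediate, as $P(x^{q})$ has nonzero constant term $m$.

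Finally I would combine the two observations: if $P(x^{q})\mid Q(x)$, then $\beta^{1/q}$ and $(\beta')^{1/q}$ would both be positive real roots of $Q$, contradicting that $Q$ has at most one. Hence $P(x^{q})\nmid Q(x)$. I expect the only step needing genuine care to be the verification that \emph{both} roots of $P$ are real and positive (rather than merely one of modulus $>1$ and one of modulus $<1$, which a priori leaves room for a complex-conjugate pair); this is settled by the reality of the Pisot number $\beta$ together with $\beta\beta'=m>0$. Once that is in hand, everything reduces to the single-sign-change structure of $Q$ and a one-line divisibility argument.
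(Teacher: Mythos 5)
Your proof is correct, but it takes a genuinely different route from the paper's. You count positive real roots: since $\beta+\beta'=n>0$ and $\beta\beta'=m>0$ force $0<\beta'<1<\beta$ (both roots are real because their moduli differ, or directly because the discriminant satisfies $n^{2}-4m\geq (n-2)^{2}+4>0$ when $m\leq n-2$), the polynomial $P(x^{q})$ has the two distinct positive real roots $\beta^{1/q}$ and $(\beta')^{1/q}$, whereas the single sign change in the coefficient sequence of $Q(x)=x^{p}-\sum_{i=0}^{p-1}b_{i}x^{i}$ (Descartes' rule, or equivalently your observation that $Q(t)/t^{p}$ is strictly increasing on $(0,\infty)$) allows $Q$ at most one positive real root; divisibility would transfer both roots to $Q$, a contradiction. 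The paper instead argues on coefficients: writing $Q=(\sum a_{i}x^{i})P(x^{q})$, it splits the cofactor according to residue classes of exponents modulo $q$ to reduce to the case $q=1$, and then shows by induction that the cofactor's coefficients satisfy $c_{i+1}\leq \beta^{-1}c_{i}\leq 0$, contradicting the fact that the leading coefficient equals $1$. Your argument is shorter, dispenses with the reduction modulo $q$ entirely, and isolates the structural reason the divisibility fails (two positive roots on one side versus one sign change on the other); it also generalizes to any divisor with at least two distinct positive real roots. The paper's induction is more laborious but stays entirely within elementary coefficient manipulations. Both proofs ultimately rest on the point you correctly flag as the one needing care, namely that \emph{both} roots of $P$ are real and positive rather than a complex-conjugate pair.
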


\begin{proof}
Let $Q(x)=(\sum a_{i}x^{i})(x^{2q}-nx^{q}+m)$. Suppose%
\begin{equation*}
\sum a_{i}x^{i}=P_{0}+P_{1}+\cdots +P_{q-1}
\end{equation*}%
where $P_{v}=\sum\limits_{i\equiv v(\text{mod}q)}a_{i}x^{i}\ $for $%
v=0,1,\cdots ,(q-1).$

Then we have
\begin{equation*}
Q(x)=P_{0}P(x^{q})\oplus P_{1}P(x^{q})\oplus \cdots \oplus P_{q-1}P(x^{q}),
\end{equation*}%
where $\oplus $ means the orthogonality of above polynomials in the basis $%
\{1,x,x^{2},\cdots \}.$

Without loss of generality, we assume that
\begin{equation*}
\deg (\sum a_{i}x^{i})\equiv u\text{ (mod}q\text{) with }0\leq u\leq q-1%
\text{.}
\end{equation*}

Let $c_{i}=a_{qi+u},$ then
\begin{equation*}
P_{u}=x^{u}(c_{0}+c_{1}x^{q}+c_{2}x^{2q}+\cdots +c_{l}x^{lq})=x^{u}U(x^{q}).
\end{equation*}%
Since $p\equiv 2q+\deg (\sum a_{i}x^{i})\equiv u$ (mod$q$), we have%
\begin{equation*}
x^{u}U(x^{q})P(x^{q})=x^{p}-\sum_{j\equiv u(\text{mod}q)}b_{j}x^{j},
\end{equation*}%
which implies
\begin{equation*}
U(x)P(x)=x^{p^{\prime }}-\sum_{i=0}^{p^{\prime }}b_{i}^{\prime }x^{i}\text{
with }b_{i}^{\prime }\in \mathbb{Z}\text{ and }b_{i}^{\prime }\geq 0.
\end{equation*}%
Therefore we obtain that
\begin{equation*}
(x^{2}-nx+m)(c_{0}+c_{1}x+c_{2}x^{2}+\cdots
+c_{l}x^{l})=x^{l+2}-\sum_{i=0}^{l+1}b_{i}^{\prime }x^{i},
\end{equation*}%
where
\begin{equation}
c_{l}=1.  \label{1}
\end{equation}%
We recall that
\begin{equation*}
x^{2}-nx+m=(x-\beta )(x-\beta ^{\prime })\text{ with }\beta >1>|\beta
^{\prime }|.
\end{equation*}

Now, we have the following

\begin{claim}
For any $0\leq i\leq l-1,$%
\begin{equation}
c_{i+1}\leq c_{i}\beta ^{-1}\leq 0.  \label{indu}
\end{equation}
\end{claim}

We will verify (\ref{indu}) by induction.

(1) For $i=0,$ we have $c_{0}m=-b_{0}^{\prime }\leq 0$ and thus $c_{0}\leq
0. $

(2) For $i=1,$ we have $-c_{0}n+mc_{1}=-b_{1}^{\prime }\leq 0$ and thus
\begin{equation*}
c_{1}\leq \frac{n}{m}c_{0}\leq \beta ^{-1}c_{0}\leq 0
\end{equation*}%
here $\frac{n}{m}>1>\beta ^{-1}.$

(3) Assume that (\ref{indu}) is true for $i-1,$ i.e., we have $c_{i}\leq
c_{i-1}\beta ^{-1}\leq 0.$ Hence
\begin{equation*}
mc_{i+1}-nc_{i}+\beta c_{i}\leq mc_{i+1}-nc_{i}+c_{i-1}=-b_{i+1}^{\prime
}\leq 0,
\end{equation*}%
which implies%
\begin{equation*}
mc_{i+1}\leq \frac{(n-\beta )}{m}c_{i}=\beta ^{-1}c_{i}\leq 0
\end{equation*}%
due to $\frac{(n-\beta )}{m}=\beta ^{-1}$. Then (\ref{indu}) is verified. In
particular, we have
\begin{equation*}
c_{l}\leq 0
\end{equation*}
which contradicts to (\ref{1}).
\end{proof}

\begin{proposition}
\label{P:3}Suppose $m\notin \{a^{i}\ |\ a\in \mathbb{N}$ and $i\in \mathbb{N}
$ with $i\geq 2\}.$ Then
\begin{equation*}
P(x^{q})\text{ is irreducible in }\mathbb{Z}[x]\text{ for any }q\geq 1.
\end{equation*}
\end{proposition}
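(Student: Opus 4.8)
The plan is to reduce the irreducibility of $P(x^{q})=(x^{q}-\beta)(x^{q}-\beta')$ over $\mathbb{Z}$ (equivalently over $\mathbb{Q}$, by Gauss's lemma, since $P(x^{q})$ is monic) to the irreducibility of the single binomial $x^{q}-\beta$ over the quadratic field $K=\mathbb{Q}(\beta)$. First I would record the underlying structure: since $P$ is irreducible we have $[K:\mathbb{Q}]=2$, and since $\beta\beta'=m>0$ together with $\beta>1$ forces $0<\beta'<1<\beta$, both conjugates are real, so $K\subset\mathbb{R}$. Let $\alpha=\beta^{1/q}$ be the positive real $q$-th root of $\beta$, which is a root of $P(x^{q})$. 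If I can establish that $x^{q}-\beta$ is irreducible over $K$, then $[\mathbb{Q}(\alpha):K]=q$ and hence $[\mathbb{Q}(\alpha):\mathbb{Q}]=[\mathbb{Q}(\alpha):K]\,[K:\mathbb{Q}]=2q=\deg P(x^{q})$; as $P(x^{q})$ is monic of degree $2q$ and vanishes at $\alpha$, it must coincide with the minimal polynomial of $\alpha$ and is therefore irreducible.

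Everything thus comes down to the binomial $x^{q}-\beta$ over $K$. For this I would invoke Capelli's criterion: $x^{q}-\beta$ is irreducible over $K$ if and only if $\beta\notin K^{p}$ for every prime $p\mid q$, and, in case $4\mid q$, also $\beta\notin-4K^{4}$. The exceptional clause is immediate here precisely because $K$ is real: for any $\delta\in K$ one has $-4\delta^{4}\le 0<\beta$, so $\beta\notin-4K^{4}$ automatically, and this condition plays no further role.

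The heart of the argument is ruling out that $\beta$ is a $p$-th power in $K$, and this is where the hypothesis on $m$ enters. Suppose $\beta=\gamma^{p}$ with $\gamma\in K$ and $p$ a prime dividing $q$. Since $\beta$ is an algebraic integer and $\gamma$ satisfies the monic equation $x^{p}-\beta=0$ with coefficients in $\mathbb{Z}[\beta]$, the element $\gamma$ is integral over $\mathbb{Z}$, whence $N_{K/\mathbb{Q}}(\gamma)\in\mathbb{Z}$. Taking norms in $\beta=\gamma^{p}$ and using $N_{K/\mathbb{Q}}(\beta)=\beta\beta'=m$ yields $m=N_{K/\mathbb{Q}}(\gamma)^{p}$. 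Because $m\ge 1$, this exhibits $m$ as a perfect $p$-th power (when $p$ is odd the integer $N_{K/\mathbb{Q}}(\gamma)$ must be positive, and when $p=2$ it is a perfect square), contradicting the assumption $m\notin\{a^{i}\mid a\in\mathbb{N},\ i\ge 2\}$. Hence $\beta\notin K^{p}$ for every prime $p\mid q$, Capelli's conditions are verified, and the degree reduction above completes the proof.

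I expect the main obstacle to lie in checking Capelli's two conditions rather than in the degree bookkeeping. The potentially delicate $-4K^{4}$ clause is what would normally require care, but it collapses here thanks to the reality of $K$, itself a consequence of both roots of the irreducible $P$ being real. The norm computation is the decisive link: it translates the field-theoretic assertion that $\beta$ is not a $p$-th power in $K$ into the purely arithmetic statement that $m$ is not a perfect power.
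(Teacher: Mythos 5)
Your proof is correct, but it takes a genuinely different route from the paper's. The paper argues bare-handed: it writes a hypothetical factorization $P(x^{q})=Q_{1}(x)Q_{2}(x)$ over $\mathbb{Z}$, expresses $|Q_{1}(0)|=|\beta^{u_{1}}(\beta')^{v_{1}}|^{1/q}$ in terms of how many roots $\omega^{i}\beta^{1/q}$ and $\omega^{i}(\beta')^{1/q}$ each factor absorbs, proves $u_{1}=v_{1}$ by a modulus argument (otherwise $\beta$ would satisfy a binomial $m^{v_{1}}x^{u_{1}-v_{1}}-|Q_{1}(0)|^{q}$, which $P$ would have to divide, forcing $|\beta'|=|\beta|$), and then reads off $m^{j/i}\in\mathbb{N}$, hence $m=a^{i}$. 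You instead reduce irreducibility of $P(x^{q})$ to irreducibility of the binomial $x^{q}-\beta$ over $K=\mathbb{Q}(\beta)$ via the tower $\mathbb{Q}\subset K\subset\mathbb{Q}(\beta^{1/q})$, dispose of the Vahlen--Capelli conditions using the reality of $K$ for the $-4K^{4}$ clause, and convert $\beta\in K^{p}$ into $m=N_{K/\mathbb{Q}}(\gamma)^{p}$ by taking norms of algebraic integers. The arithmetic core is the same in both arguments --- the norm $\beta\beta'=m$ must be a perfect power --- but your field-theoretic packaging outsources the combinatorial analysis of which subsets of roots can form a rational factor to Capelli's theorem, making the proof shorter and arguably more transparent, at the cost of invoking a nontrivial classical criterion where the paper is elementary and self-contained. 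Two small housekeeping points: the case $q=1$ is not covered by Capelli (state it separately, as $P$ itself is irreducible by \cite{jiang}), and you should note $N_{K/\mathbb{Q}}(\gamma)\neq 0$ (immediate from $m\neq 0$) before declaring $|N_{K/\mathbb{Q}}(\gamma)|\in\mathbb{N}$.
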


\begin{proof}
Note that $P(x)=P(x^{1})$ is irreducible (e.g. see \cite{jiang}). Without
loss of generality, we assume that $q\geq 2.$

Let $\omega =e^{2\pi \sqrt{-1}/q}.$ Then
\begin{equation*}
P(x^{q})=\left( \prod\nolimits_{i=0}^{q-1}(x-\omega ^{i}\beta ^{1/q})\right)
\cdot \left( \prod\nolimits_{i=0}^{q-1}(x-\omega ^{i}(\beta ^{\prime
})^{1/q})\right) .
\end{equation*}%
Suppose on the contrary that $P(x^{q})=Q_{1}(x)Q_{2}(x)$ and $%
Q_{1}(x),Q_{2}(x)\in \mathbb{Z}[x]$ with $\deg Q_{1},\deg Q_{2}\geq 1.$ We
note that
\begin{equation*}
m=|P(0)|=|Q_{1}(0)|\cdot |Q_{2}(0)|,
\end{equation*}%
where
\begin{equation*}
|Q_{1}(0)|=|\beta ^{u_{1}}(\beta ^{\prime })^{v_{1}}|^{1/q}\in \mathbb{N}%
\text{ and }|Q_{2}(0)|=|\beta ^{u_{2}}(\beta ^{\prime })^{v_{2}}|^{1/q}\in
\mathbb{N}
\end{equation*}%
with $u_{1},v_{1},u_{2},v_{2}\geq 1.$

We will show that $u_{1}=v_{1}.$ Otherwise by symmetry we may assume that $%
u_{1}>v_{1},$ then
\begin{equation*}
(\beta ^{u_{1}-v_{1}})=\frac{|Q_{1}(0)|^{q}}{(\beta \beta ^{\prime })^{v_{1}}%
}=\frac{|Q_{1}(0)|^{q}}{(m)^{v_{1}}},
\end{equation*}%
which implies
\begin{equation*}
R(\beta )=0\text{ with }R(x)=m^{v_{1}}x^{u_{1}-v_{1}}-|Q_{1}(0)|^{q}\in
\mathbb{Z}[x].
\end{equation*}%
By \cite{jiang}, we obtain that $P(x)=x^{2}-nx+m$ is an irreducible
polynomial satisfying $P(\beta )=0.$ Therefore, we have
\begin{equation*}
P|R\text{ but }R\text{ only has roots with module }\beta .
\end{equation*}%
Now $R(\beta ^{\prime })=P(\beta ^{\prime })=0$ with $|\beta ^{\prime
}|<|\beta |.$ This is a contradiction.

In the same way, we have $u_{2}=v_{2}.$ Now we obtain that
\begin{equation*}
u_{1}=v_{1}\text{ and }u_{2}=v_{2}.
\end{equation*}%
Let $u_{1}/q=j/i$ with $(i,j)=1$ and $j<i$ ($i\geq 2$), then $%
u_{2}/q=(i-j)/i $ since $u_{1}+u_{2}=q.$ Hence
\begin{equation*}
|Q_{1}(0)|=m^{\frac{j}{i}}\in \mathbb{N}\text{ and }|Q_{2}(0)|=m^{\frac{i-j}{%
i}}\in \mathbb{N}
\end{equation*}%
and thus $m^{\frac{1}{i}}=a\in \mathbb{N}$ and $m=a^{i}$ with $i\geq 2.$
This is a contradiction.
\end{proof}

\subsection{Proof of Theorem}

$\ $\

It follows from Propositions \ref{P:1}-\ref{P:2} that there are $r\in (0,1)$
and $k,k_{1}\leq k_{2}\leq \cdots \leq k_{t}\in \mathbb{N}$ such that
\begin{equation*}
\bar{P}(r^{-s})=\bar{Q}(r^{-s})=0,
\end{equation*}%
where $\bar{P}$ and $\bar{Q}$ are defined in (\ref{999}).

Suppose on the contrary that $\bar{P}(x)=P(x^{k})=x^{2k}-nx^{k}+m$ is
irreducible in $\mathbb{Z}[x]$, then we have
\begin{equation*}
P(x^{k})|(x^{k_{t}}-\sum_{i=1}^{t}x^{k_{t}-k_{i}}),
\end{equation*}%
which contradicts to Proposition \ref{P:4}. Therefore $P(x^{k})\text{ is
reducible in }\mathbb{Z}[x], $ and thus $m\in \{a^{i}\ |\ a\in \mathbb{N}$
and $i\in \mathbb{N} $ with $i\geq 2\}$ by Proposition \ref{P:3}.

\bigskip

\bigskip

\end{document}